\documentclass{amsart}

\pdfoutput=1

\usepackage[dvipsnames,svgnames,x11names,hyperref]{xcolor}
\usepackage{bbm,url,graphicx,verbatim,amssymb,booktabs,lmodern,mathtools,microtype,mathabx}
\usepackage[shortlabels]{enumitem}
\usepackage[pagebackref,colorlinks,citecolor=Mahogany,linkcolor=Mahogany,urlcolor=Mahogany,filecolor=Mahogany]{hyperref}
\usepackage[capitalize]{cleveref}
\usepackage[mathscr]{euscript}
\usepackage[margin=1.38in]{geometry}
\usepackage{tikz,tikz-cd}
\usetikzlibrary{matrix,calc,positioning,arrows,decorations.pathreplacing,patterns,arrows}

\AtBeginDocument{%
	\def\MR#1{}
}

\setenumerate[1]{label=(\roman*),} 

\setitemize[1]{label=\raisebox{0.25ex}{\tiny$\bullet$}}

\newtheorem{theorem}{Theorem}[section]
\newtheorem{lemma}[theorem]{Lemma}
\newtheorem{proposition}[theorem]{Proposition}

\newtheorem*{corollary*}{Corollary}

\newtheorem{atheorem}{Theorem}

\theoremstyle{definition}
\newtheorem{definition}[theorem]{Definition}

\theoremstyle{remark}

\newtheorem{remark}[theorem]{Remark}
\newtheorem*{remark*}{Remark}

\newcommand{\icat}[1]{{\mathscr{#1}}}
\renewcommand{\rm}[1]{{\mathrm{#1}}}
\newcommand{\ra}{\rightarrow}
\newcommand{\lra}{\longrightarrow}

\DeclareMathOperator*{\colim}{colim}

\newcommand{\Fun}{\rm{Fun}}

\newcommand{\Map}{\rm{Map}}

\newcommand{\Cat}{{\icat{C}\rm{at}_\infty}}

\newcommand{\CAlg}{\rm{CAlg}}

\newcommand{\Env}{\rm{Env}}
\newcommand{\Finstar}{\rm{Fin}_*}
\newcommand{\Fin}{\rm{Fin}}
\newcommand{\Alg}{\rm{Alg}}
\newcommand{\Comm}{\rm{Comm}}

\newcommand{\Op}{{\icat{O}\rm{p}_\infty}}
\newcommand{\Opast}{{\icat{O}\rm{p}^\ast_\infty}}
\newcommand{\genOp}{{\icat{O}\rm{p}^{\rm{gen}}_\infty}}
\newcommand{\nuOp}{{\icat{O}\rm{p}^{\rm{nu}}_\infty}}
\newcommand{\Surj}{{\rm{Surj}_\ast}}
\newcommand{\nonuni}{\rm{nu}}

\newcommand{\ed}{E_d}

\newcommand{\Mul}{\rm{Mul}}
\newcommand{\id}{\rm{id}}
\newcommand{\Cocart}{\icat{C}\rm{ocart}}

\newcommand{\Fam}{\icat{F}\rm{am}}

\newcommand{\Ed}{E_d}

\newcommand{\bfR}{\mathbf{R}}
\newcommand{\bfF}{\mathbf{F}}
\newcommand{\bfQ}{\mathbf{Q}}
\newcommand{\Top}{\rm{Top}}
\newcommand{\BTop}{\rm{BTop}}

\DeclareFontFamily{U}{min}{}
\DeclareFontShape{U}{min}{m}{n}{<-> udmj30}{}

\setcounter{tocdepth}{1}

\title[Two remarks on spaces of maps between operads of little cubes]{Two remarks on spaces of maps between\\ operads of little cubes}

\author{Geoffroy Horel}
\address{Université Sorbonne Paris Nord, Laboratoire Analyse, Géométrie et Applications, CNRS (UMR 7539), 93430, Villetaneuse, France.}
\email{horel@math.univ-paris13.fr}

\author{Manuel Krannich}
\address{Department of Mathematics, Karlsruhe Institute of Technology, 76131 Karlsruhe, Germany}
\email{krannich@kit.edu}

\author{Alexander Kupers}
\address{Department of Computer and Mathematical Sciences, University of Toronto Scarborough, 1265 Military Trail, Toronto, ON M1C 1A4, Canada}
\email{a.kupers@utoronto.ca}

\begin{document}

\begin{abstract}We record two facts on spaces of derived maps between the operads $E_d$ of little $d$-cubes. Firstly, these mapping spaces are equivalent to the mapping spaces between the non-unitary versions of $E_d$. Secondly, all endomorphisms of $E_d$ are automorphisms. We also discuss variants for localisations of $E_d$ and for versions with tangential structures.
\end{abstract}
	
\maketitle

The operad of little $d$-cubes $\ed$, whose space $E_d(k)$ of $k$-ary operations is the space of rectilinear embeddings $\bigsqcup_k (-1,1)^d \hookrightarrow (-1,1)^d$, is omnipresent in homotopy theory. In recent years, it also gained prominence in geometric topology, not least because it became clear that the (derived) mapping space $\Map_{\Op}(\ed,E_{d'})$ from the $\ed$- to the $E_{d'}$-operad is closely related to spaces of embeddings of $d$- into $d'$-dimensional manifolds (see e.g.\,\cite{DwyerHess,Turchin,AroneTurchin,BoavidaWeiss}). This note serves to record two facts on these spaces of derived maps between $E_d$-operads.
 
\begin{remark*}We phrase the results in the $\infty$-category $\Op$ of $\infty$-operads in the sense of Lurie \cite{LurieHA}. However as $\Op$ is known to be equivalent to the underlying $\infty$-category of the model categories of other models of operads such as simplicial coloured operads \cite{cisinskimoerdijkdendroidal,cisinskimoerdijkdendroidal2,barwickoperator,chuhaugsengheuts}, we could have also stated the results in any of these settings.\end{remark*}

The first fact concerns the space of $0$-ary operations. The $E_d$-operad is \emph{unital} or \emph{unitary}, in the sense that it has a contractible space of $0$-ary operations; there is a unique embedding $\varnothing \hookrightarrow (-1,1)^d$. There is also \emph{non-unitary} variant $\Ed^\nonuni$, obtained by replacing the space of $0$-ary operations with the empty set. This is the value of $E_d$ under a ``non-unitarisation'' functor $(-)^{\rm{nu}}\colon \Op\ra \Op$, so there is in particular a comparison map from the space of maps $E_d\ra E_{d'}$ to the space of maps between the non-unitary variants. This is an equivalence:
\begin{atheorem}\label{athm:main} For $d,d' \geq 1$, the map \[(-)^{\rm{nu}}\colon \Map_{\Op}(\ed,E_{d'})\lra \Map_{\Op}(\ed^{\rm{nu}},E^{\rm{nu}}_{d'})\] is an equivalence.
\end{atheorem}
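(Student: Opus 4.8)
The plan is to reduce the asserted equivalence to a statement comparing unital and non-unitary $\ed$-algebras, where it becomes an instance of a comparison theorem of Lurie.

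\emph{Reduction to a restriction map.} The functor $(-)^{\rm{nu}}$ lands in the full subcategory $\nuOp \subseteq \Op$ of non-unitary $\infty$-operads, and the inclusion $\nuOp \hookrightarrow \Op$ preserves colimits --- a colimit of non-unitary operads is again non-unitary, since operadic composition of operations of positive arity never produces a nullary operation --- so by the adjoint functor theorem it admits a right adjoint, which one identifies with $(-)^{\rm{nu}}$. Write $u_d\colon \ed^{\rm{nu}} \to \ed$ and $u_{d'}\colon E_{d'}^{\rm{nu}} \to E_{d'}$ for the counit maps. Because $\ed^{\rm{nu}}$ already lies in $\nuOp$, postcomposition with $u_{d'}$ is the adjunction equivalence
\[ \Map_\Op(\ed^{\rm{nu}}, E_{d'}^{\rm{nu}}) \xlra{\sim} \Map_\Op(\ed^{\rm{nu}}, E_{d'}), \]
and by naturality of the counit its composite with the map of the theorem equals restriction along $u_d$. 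It therefore suffices to prove that $u_d^*\colon \Map_\Op(\ed, E_{d'}) \to \Map_\Op(\ed^{\rm{nu}}, E_{d'})$ is an equivalence for $d, d' \geq 1$.

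\emph{Identification and quasi-unitality.} Regarding $E_{d'}$ as an $\infty$-operad, the spaces $\Map_\Op(\ed, E_{d'})$ and $\Map_\Op(\ed^{\rm{nu}}, E_{d'})$ are the spaces of unital and of non-unitary $\ed$-algebras in $E_{d'}$, and $u_d^*$ is the functor forgetting the unit; this is transparent from the description of $\ed$ and $\ed^{\rm{nu}}$ by rectilinear embeddings. For $d \geq 1$, Lurie's comparison \cite[\S5.4.4]{LurieHA} realises the unital $\ed$-algebras in $E_{d'}$ as precisely the \emph{quasi-unital} non-unitary ones, so $u_d^*$ is an equivalence once every non-unitary $\ed$-algebra $\phi\colon \ed^{\rm{nu}} \to E_{d'}$ is shown to be quasi-unital. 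Its underlying object is the unique colour $x$ of $E_{d'}$, and since $E_{d'}$ is unital the space $E_{d'}(0) = \Mul_{E_{d'}}(\varnothing; x)$ is contractible, supplying an essentially unique candidate quasi-unit $e$. Restricting $\phi$ along $E_1^{\rm{nu}} \hookrightarrow \ed^{\rm{nu}}$ produces a non-unitary multiplication $\mu \in E_{d'}(2) = \Mul_{E_{d'}}(x, x; x)$, and $e$ is a quasi-unit exactly when the translations $\mu \circ_1 e$ and $\mu \circ_2 e$ are homotopic to $\id_x$. Both lie in $E_{d'}(1) = \Mul_{E_{d'}}(x; x)$, the contractible space of rectilinear self-embeddings of $(-1,1)^{d'}$, and hence automatically equal $\id_x$. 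So $\phi$ is quasi-unital, and the theorem follows.

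\emph{Where the work is.} The only non-formal ingredient is the comparison theorem invoked above. As stated in \cite{LurieHA} it concerns algebras in a symmetric monoidal $\infty$-category rather than in a general $\infty$-operad, so to apply it to $E_{d'}$ one passes to the symmetric monoidal envelope $\Env(E_{d'})$, works inside the full subcategory of $\ed$-algebras whose underlying object is the monoidal generator --- a subcategory preserved by unitalisation, and on which ``quasi-unital'' is exactly the condition checked above --- and transports the resulting equivalence back along the envelope. Making this reduction precise, together with the identification of $u_d$ with the forgetful functor, is the part requiring attention; the geometric input, namely the contractibility of $E_{d'}(1)$, is immediate.
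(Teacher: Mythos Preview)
Your approach is essentially the paper's: reduce via the colocalisation to precomposition with the counit $E_d^{\rm{nu}}\to E_d$, invoke Lurie's quasi-unit comparison \cite[5.4.4.5]{LurieHA}, and verify quasi-unitality using that $E_{d'}(0)$ is nonempty and $E_{d'}(1)$ is connected. The step you flag as ``requiring attention''---passing through $\Env(E_{d'})$ and restricting to algebras on the generating colour---is exactly what the paper carries out: it uses that the lifted envelope $\Op\to\CAlg(\Cat)_{/\Fin}$ is fully faithful \cite[2.4.3]{HaugsengKock}, rewrites the problem as a map on fibres of $\Map_\Op(E_d^{(\rm{nu})},\Env(E_{d'}))\to\Map_\Op(E_d^{(\rm{nu})},\Fin)\simeq\Fin^\simeq$ over $\{1\}$, and checks that the bottom map is an equivalence via \cite[2.4.3.9]{LurieHA}; this makes precise your ``transport back along the envelope.'' One small point: your justification that $\nuOp\hookrightarrow\Op$ preserves colimits is heuristic---the paper instead identifies $\nuOp\simeq\Op_{/\Surj}$ and obtains the right adjoint as pullback along $\Surj\hookrightarrow\Finstar$, which is cleaner.
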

\begin{remark*}There are various extensions of \cref{athm:main}:
\begin{enumerate}
\item The target $E_{d'}$ may be replaced by any $\infty$-operad $\icat{O}$ such that for all colours $c$ the space of multi-operations $\Mul_{\icat{O}}(\varnothing,c)$ is non-empty and $\Mul_{\icat{O}}(c,c)$ is connected (see \cref{sec:main-result}). 
\item The source $\ed$ may be replaced by variants involving tangential structures, for instance the framed $\ed$-operad (see \cref{sec:tangential-structures}).
\item Source and target may be replaced by certain localisations (see \cref{sec:localisations}).
\item The mapping spaces can also be taken in the $\infty$-category underlying the model category of simplicial one-coloured operads instead of multi-coloured ones (see \cref{sec:one-coloured}).
\end{enumerate}
\end{remark*}
The second fact is that all endomorphisms of $\ed$ are automorphisms:
\begin{atheorem}\label{athm:end-is-aut} For $d \geq 1$, every self-map of $\ed$ is an equivalence.
\end{atheorem}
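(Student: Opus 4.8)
The plan is to reduce Theorem~\ref{athm:end-is-aut} to the statement that, for every endomorphism $f$ of $\ed$ with $d\geq 2$, the self-map $f_2$ of the space $\ed(2)\simeq S^{d-1}$ of binary operations is a homotopy equivalence, and then to prove this. The case $d=1$ is handled directly: $E_1$ is the associative operad, $\pi_0\Map_{\Op}(E_1,E_1)$ is the two-element set $\{\id,(-)^{\op}\}$ of associative multiplications on it, and both are automorphisms.

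First I would record the easy direction: for $d\geq 2$, if $f_2\colon S^{d-1}\to S^{d-1}$ is an equivalence then so is $f$. Since $\ed(1)\simeq *$ we may take $f_1=\id$. The integral homology operad $H_*(\ed;\bfZ)$ is torsion-free and generated by the product $\mu$ and the Browder bracket $\lambda$ — it is the integral $d$-Poisson operad, realised by the homology of the ordered configuration spaces $F(\bfR^d,k)\simeq\ed(k)$ — and $H_*(f)$ fixes $\mu$ (it is the identity on $H_0$, the commutative operad, because $f_2$ is a map of connected spaces) and sends $\lambda$ to $(\deg f_2)\lambda=\pm\lambda$; hence $H_*(f_k;\bfZ)$ is $\pm$-diagonal in the basis of Poisson monomials, so an isomorphism, for every $k$. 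For $d\geq 3$ the spaces $\ed(k)$ are simply connected of finite type, so each $f_k$ is an equivalence by Whitehead's theorem. For $d=2$ the spaces $E_2(k)\simeq F(\bfR^2,k)$ are aspherical, and $f_k$ induces a surjection on $\pi_1(E_2(k))=P_k$ because $P_k$ is generated by the images under operadic insertion of a generator of $\pi_1(E_2(2))\cong\bfZ$, on which $f$ acts invertibly; as $P_k$ is finitely generated and residually finite, hence Hopfian, this surjection is an isomorphism and $f_k$ is an equivalence. Either way $f$ is an equivalence.

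It remains to prove that $f_2$ is always an equivalence, i.e.\ $\deg f_2=\pm1$. Since $f$ is a map of operads, $f_2$ is equivariant for the free antipodal $\Sigma_2$-action on $S^{d-1}$, so $\deg f_2$ is odd; but more is true. Fix a prime $p$ and pass to $H_*(\ed;\bfF_p)$, which by F.~Cohen's computation is generated as an operad by $\mu$, $\lambda$ and the Dyer--Lashof classes, subject to relations among which is one of the shape $\mathcal Q(x)=c\,\Lambda(x)$, with $\mathcal Q$ a top Dyer--Lashof operation, $\Lambda(x)$ a nonzero iterated Browder bracket of the inputs involving at least one bracket (for $x$ of a suitably chosen degree), and $c\in\bfF_p^\times$. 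Now $H_*(f;\bfF_p)$ multiplies $\lambda$ by $\deg f_2\bmod p$, hence multiplies $\Lambda(x)$ by a power of $\deg f_2$; on the other hand, because $f_p$ is $\Sigma_p$-equivariant, it acts on the homotopy-orbit space carrying $\mathcal Q$ through a self-map that is the identity on the pertinent mod-$p$ homology, so $H_*(f;\bfF_p)$ fixes $\mathcal Q(x)$ up to a sign independent of $\deg f_2$. Comparing the two sides of the relation forces $\deg f_2$ to be a unit modulo $p$; as this holds for every $p$ and $\deg f_2\in\bfZ$, we conclude $\deg f_2=\pm1$.

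The hard part is this last step. The easy implication and the case $d=1$ are routine, but establishing $\deg f_2=\pm1$ requires the precise form of Cohen's relations linking the Browder bracket to the top Dyer--Lashof operations at each prime, together with a genuinely non-formal input about how the equivariant maps $f_p$ act on the homology classes carrying those operations — one needs the factor picked up there to be a unit (a sign) rather than the same power of $\deg f_2$ that appears on the bracket side, since otherwise the relation is satisfied vacuously by Fermat's little theorem. (If convenient, one may first use Theorem~\ref{athm:main} to replace $\ed$ by $\ed^{\nonuni}$ everywhere, which changes nothing above; and should the embedding-calculus results recalled in the introduction yield a description of $\Map_{\Op}(\ed,\ed)$, or of its path components, as an iterated loop space, then $\pi_0$ would be a group for formal reasons and the theorem would follow at once.)
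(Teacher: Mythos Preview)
Your reduction to $\deg f_2=\pm1$ is correct and, for $d\ge 3$, matches the paper's; your $d=2$ reduction via the Hopfian property of the pure braid groups is a pleasant self-contained alternative to the paper's citation of \cite[Thm 8.5]{Horel}. The gap is in the last step. You want a Cohen relation $\mathcal Q(x)=c\,\Lambda(x)$ on which $f$ acts by visibly different scalars on the two sides, but your own Fermat remark pinpoints the obstruction: the standard relations in $H_*(\ed;\bfF_p)$ linking Dyer--Lashof classes to iterated brackets (e.g.\ $\lambda(x,\xi_1 y)=\mathrm{ad}(y)^p(x)$) balance bracket-counts so that both sides scale by the same power of $D$ modulo~$p$, and when $p\mid D$ both sides simply vanish and nothing is learned. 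The assertion that $\Sigma_p$-equivariance alone forces $f$ to fix $\mathcal Q(x)$ up to a sign is not justified: equivariance only guarantees an induced self-map of $E_d(p)_{h\Sigma_p}$ over $B\Sigma_p$, not that this self-map is the identity on homology, and establishing the latter already requires controlling the relevant spectral sequence.

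The paper avoids Cohen's relations entirely. If $p\mid D$ then $f$ acts as $0$ on $\widetilde H_*(E_d(p);\bfF_p)$; in the mod-$p$ Serre spectral sequence of $E_d(p)\to E_d(p)/\Sigma_p\to B\Sigma_p$ the induced map is therefore $0$ on every row above the bottom and the identity on the bottom row. Naturality of the differentials then forces every differential out of the bottom row to vanish, so the edge map $H_*(E_d(p)/\Sigma_p;\bfF_p)\to H_*(B\Sigma_p;\bfF_p)$ is surjective---impossible, since the source is the homology of a finite-dimensional manifold while the target is nonzero in infinitely many degrees. Your observation that $\deg f_2$ is odd is exactly the $p=2$ instance of this argument, run for $S^{d-1}\to\bfR P^{d-1}\to B\Sigma_2$.
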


\subsection*{Acknowledgements} We thank Rune Haugseng, Gijs Heuts, and Jan Steinebrunner for helpful discussions. This material is partially based upon work supported by the Swedish Research Council under grant no.\,2016-06596 while the authors were in residence at Institut Mittag-Leffler in Sweden during the semester \emph{Higher algebraic structures in algebra, topology and geometry}.

GH acknowledges the support of the French Agence Nationale pour la Recherche (ANR) project number ANR-20-CE40-0016 HighAGT as well as the Romanian Ministry of Education and Research, grant CNCS-UE-FISCDI, project number PN-III-P4-ID-PCE-2020-279.

AK acknowledges the support of the Natural Sciences and Engineering Research Council of Canada (NSERC) [funding reference number 512156 and 512250], as well as the Research Competitiveness Fund of the University of Toronto at Scarborough. AK is supported by an Alfred J.~Sloan Research Fellowship.

\section{\cref{athm:main} and a generalisation}

\subsection{Non-unitary operads}
To define \emph{non-unitary $\infty$-operads}, we denote the category of finite pointed sets by $\Fin_\ast$ and write $\iota\colon \Surj\hookrightarrow \Finstar$ for the inclusion of the wide subcategory on the surjections. This inclusion can be obtained by taking operadic nerves \cite[2.1.1.27]{LurieHA} of the inclusion $\Comm^{\rm{nu}}\hookrightarrow \Comm$ of one-coloured simplicial operads whose spaces of $k$-ary operations consist of a point in both cases for $k\ge1$, and are for $k=0$ given by $\Comm(0)=\ast$ and $\Comm^{\rm{nu}}(0)=\varnothing$. In particular, $\iota\colon \Surj\hookrightarrow \Finstar$ is an $\infty$-operad. The following definition appears implicitly in \cite[5.4.4.1]{LurieHA}.

\begin{definition}\label{dfn:non-unitary}An $\infty$-operad $\icat{O}$ is \emph{non-unitary} if the map $\icat{O}^{\otimes}\ra \Fin_\ast$ factors over $ \Surj\hookrightarrow \Finstar$. We denote the full subcategory of \emph{non-unitary $\infty$-operads} by $\nuOp\subset\Op$.
\end{definition}

\begin{remark}\ 
\begin{enumerate}
\item The forgetful functor $\smash{\Op_{/\Surj}}\ra \Op$ of the category of $\infty$-operads over $ \Surj\hookrightarrow \Finstar$ lands in the subcategory $\nuOp\subset\Op$, and since factorisations of maps $\icat{O}^{\otimes}\ra \Fin_\ast$ over $\Surj\hookrightarrow \Finstar$ are unique, the resulting functor $\Op_{/\Surj}\ra\nuOp$ is an equivalence.
\item Equivalently, an $\infty$-operad $\icat{O}$ is non-unitary if its spaces of multi-operations \cite[2.1.1.16]{LurieHA} satisfy $\Mul_{\icat{O}}(\varnothing,c)=\varnothing$ for all colours $c$ (this follows straight from the axioms of an $\infty$-operad \cite[2.1.1.10]{LurieHA}).
\item Guided by \cite[5.4.4.1]{LurieHA}, one might be tempted to use the adjective ``non-unital'' as opposed to ``non-unitary''. We opted against it since, firstly, ``non-unital'' is used in \cite[2.3]{LurieHA} for a weaker condition and, secondly, \cref{dfn:non-unitary} is consistent with \cite{FresseBook} in that a non-unitary $\infty$-operad is the multi-coloured and $\infty$-categorical version of the notion of a non-unitary operad from loc.cit.
\end{enumerate}\end{remark}

As mentioned in the previous remark, the inclusion $\iota_\ast\colon \nuOp \hookrightarrow \Op$ can be viewed as the forgetful functor $\smash{\Op_{/\Surj}}\ra \Op$, so it has (as any forgetful functor of an overcategory of a category with products) a right-adjoint $\iota^\ast\colon \Op\ra \nuOp$ given by taking products with $\Surj$ in $\Op$. As the forgetful functor $\Op \to \Cat_{/\Finstar}$ preserves products (it in fact creates all limits \cite[1.13]{AyalaFrancisTanaka}), this right-adjoint is given by sending an operad $\icat{O}^\otimes\ra \Finstar$ to the pullback $\icat{O}^\otimes\times_{\Finstar}\Surj\rightarrow \Surj$. We write 
\[(-)^{\rm{nu}}\colon \Op\lra \Op\]
for the composition $(-)^{\rm{nu}}\coloneq \iota_*\iota^*$. This is a colocalisation since $\iota_\ast\colon \nuOp \hookrightarrow \Op$ is fully faithful. 

\subsection{Statement and proof of a generalisation of \cref{athm:main}}\label{sec:main-result}
We consider the following property on the spaces of multi-operations of an $\infty$-operad $\icat{O}$:

\begin{definition}\label{dfn:quasi-unitalising}An $\infty$-operad $\icat{O}$ is \emph{quasi-unitalising} if for each colour $c$, the space $\Mul_{\icat{O}}(\varnothing,c)$ is non-empty and $\Mul_{\icat{O}}(c,c)$ is connected.
\end{definition}

The operads $\ed$ are one-coloured and have contractible spaces of $0$- and $1$-ary operations, so in particular are quasi-unitalising. \cref{athm:main} is thus special case of the following result. Its statement involves the notion of a $0$-coconnected map, which is a map that induces an injection on the level of path-components and an isomorphism on all homotopy groups of degree $i\ge1$.

\begin{theorem}\label{thm:main-general-target}
	For $d\ge1$ and any $\infty$-operad $\icat{O}$, the map 
	\[(-)^{\nonuni}\colon \Map_{\Op}(\ed,\icat{O}) \lra \Map_{\Op}(E^\rm{nu}_d,\icat{O}^\rm{nu})\]
	is $0$-coconnected. If $\icat{O}$ is quasi-unitalising, then it is an equivalence.
\end{theorem}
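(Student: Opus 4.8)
The strategy is to produce an explicit inverse, or at least to analyze the map fiberwise. The key observation is that $(-)^{\nonuni}$ sits in an adjunction: since $\iota_\ast\colon \nuOp\hookrightarrow\Op$ is fully faithful with right adjoint $\iota^\ast$, for any $\infty$-operad $\icat{P}$ we have $\Map_{\Op}(\iota_\ast\icat{P}^{\nonuni},\icat{O})\simeq\Map_{\nuOp}(\icat{P}^{\nonuni},\iota^\ast\icat{O})$... wait, that is the wrong variance. Let me reconsider: the colocalisation $(-)^{\nonuni}=\iota_\ast\iota^\ast$ comes with a counit $\icat{O}^{\nonuni}\to\icat{O}$, and for a \emph{non-unitary} source the relevant statement is that $\Map_{\Op}(\icat{P},\icat{O})\xrightarrow{\ \sim\ }\Map_{\Op}(\icat{P},\icat{O}^{\nonuni})$ whenever $\icat{P}$ is non-unitary; in particular $\Map_{\Op}(E_d^{\nonuni},\icat{O})\simeq\Map_{\Op}(E_d^{\nonuni},\icat{O}^{\nonuni})$. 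So the map in the theorem is equivalent to the restriction map
\[
\rho\colon \Map_{\Op}(E_d,\icat{O})\lra \Map_{\Op}(E_d^{\nonuni},\icat{O})
\]
induced by precomposition along the canonical map $\epsilon\colon E_d^{\nonuni}\to E_d$ (which exhibits $E_d^{\nonuni}$ as the non-unitarisation of $E_d$, i.e.\ the arity-$\le 0$ part is deleted). Thus everything reduces to understanding how much data is gained by extending a map out of $E_d^{\nonuni}$ to a map out of $E_d$: the only new structure to specify is the image of the unique nullary operation of $E_d$, compatibly with the operadic composition.

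The plan is then to model $\Map_{\Op}(E_d,\icat{O})$, using a cofibrant model of $E_d$ (e.g.\ a dendroidal/Boardman--Vogt resolution, or working with the Lurie model via the generators-and-relations presentation), as a limit over a diagram indexed by trees/corollas, and to identify the homotopy fiber of $\rho$ over a fixed $\phi\colon E_d^{\nonuni}\to\icat{O}$ landing on a colour $c=\phi(\ast)$. First I would argue that this fiber is the space of ways to coherently add a nullary operation: a point of $\Mul_{\icat{O}}(\varnothing,c)$, together with homotopies witnessing compatibility with the $E_d$-structure maps $E_d(k)\times\{\ast\}\to E_d(k-1)$ for all $k$ — concretely, the partial-unit axioms $\mathrm{m}(\mu;\mathrm{id},\dots,u,\dots,\mathrm{id})\simeq\mu|_{\text{one input forgotten}}$. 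Unwinding these, the space of choices is a totalisation of a cosimplicial-type object whose $k$-th term involves $\Mul_{\icat{O}}(c^{\times k},c)$; the nullary datum itself contributes $\Mul_{\icat{O}}(\varnothing,c)$ and the first compatibility contributes a path in $\Mul_{\icat{O}}(c,c)$ (to the identity, up to the action of $E_d(1)\simeq\ast$).

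From this description the two assertions fall out. For the \emph{$0$-coconnectedness} in general: the fiber over $\phi$ is empty unless one can lift at all, and when nonempty any two lifts differ by data living in path-spaces of $\Mul_{\icat{O}}(c,c)$ and higher operation spaces; a careful count — or better, an obstruction-theoretic spectral sequence for the totalisation — shows the fiber has vanishing homotopy in positive degrees relative to the base, which is exactly the statement that $\rho$ is $0$-coconnected (injective on $\pi_0$, iso on $\pi_{\ge 1}$). For the \emph{quasi-unitalising} case: the hypothesis $\Mul_{\icat{O}}(\varnothing,c)\ne\varnothing$ guarantees a lift exists (surjectivity on $\pi_0$), and $\Mul_{\icat{O}}(c,c)$ connected kills the remaining $\pi_0$-ambiguity of the lift, so the fiber is connected; combined with $0$-coconnectedness this forces the fiber to be contractible, hence $\rho$ is an equivalence.

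The main obstacle I anticipate is making the informal ``space of coherent nullary operations'' precise and controlling its higher homotopy — i.e.\ setting up the totalisation/obstruction argument rigorously inside Lurie's model of $\infty$-operads rather than by hand-waving with trees. A clean way to circumvent the combinatorics is to exhibit $E_d$ as a pushout (or a finite colimit) in $\Op$ built from $E_d^{\nonuni}$ by freely adjoining a nullary operation and then imposing the unit relations, so that $\Map_{\Op}(E_d,\icat{O})$ becomes an honest (homotopy) limit that one can analyze term by term; verifying that such a presentation exists and is homotopy-meaningful is where the real work lies. An alternative, perhaps cleaner, route — which I would pursue in parallel — is to use that $E_d$ is coherent and that $E_d^{\nonuni}\to E_d$ is, after a suitable localisation, close to an equivalence on the level of $\icat{O}$-algebras, reducing the claim to Lurie's comparison results between unital and non-unital algebras in \cite[\S 5.4.4]{LurieHA}; one then only needs the mild input that the nullary and unary operation spaces of $E_d$ are contractible.
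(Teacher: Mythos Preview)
Your opening reduction is correct and matches the paper exactly: since $(-)^{\nonuni}$ is a colocalisation, the map in question is equivalent to precomposition
\[
j^*\colon \Map_{\Op}(\ed,\icat{O}) \lra \Map_{\Op}(\ed^{\nonuni},\icat{O})
\]
along the counit $j\colon \ed^{\nonuni}\to \ed$. Your heuristic for the quasi-unitalising case---a point of $\Mul_{\icat{O}}(\varnothing,c)$ exists, and connectedness of $\Mul_{\icat{O}}(c,c)$ forces the two unit compatibilities to hold automatically---also matches the paper's endgame.

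The gap is in how you get there. Your primary plan (model the fibre of $j^*$ as a totalisation encoding the coherent choice of a nullary operation, then run an obstruction spectral sequence) is plausible in spirit, but you yourself flag that making this precise and proving the nonempty fibres are \emph{contractible}---not merely connected---is where the real work lies; nothing in the sketch actually does it, and the pushout presentation of $\ed$ you propose as a workaround would itself need to be constructed and shown to be homotopically meaningful.

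Your alternative route via \cite[\S5.4.4]{LurieHA} is exactly the tool the paper uses, but you are missing the bridge that makes it applicable. Lurie's result \cite[5.4.4.5]{LurieHA} says that $\Alg_{\ed}(\icat{C})\to\Alg_{\ed^{\nonuni}}(\icat{C})$ is fully faithful with essential image the quasi-unital algebras---but this is a statement about algebras in a \emph{symmetric monoidal $\infty$-category} $\icat{C}$, whereas here $\icat{O}$ is an arbitrary $\infty$-operad. (Coherence of $\ed$ plays no role.) The paper's key move is to reduce to the symmetric monoidal case via the monoidal envelope: by \cite{HaugsengKock} the lift $\Env\colon\Op\to\CAlg(\Cat)_{/\Fin}$ is fully faithful, so $j^*$ on $\Map_{\Op}(-,\icat{O})$ becomes the map on fibres of
\[
\begin{tikzcd}[row sep=0.4cm]
\Map_{\Op}(\ed,\Env(\icat{O}))\rar{j^*}\dar & \Map_{\Op}(\ed^{\nonuni},\Env(\icat{O}))\dar\\
\Map_{\Op}(\ed,\Fin)\rar{j^*} & \Map_{\Op}(\ed^{\nonuni},\Fin)
\end{tikzcd}
\]
over $\{1\}\in\Fin^\simeq$. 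Now $\Env(\icat{O})$ and $\Fin$ \emph{are} symmetric monoidal, so Lurie's full faithfulness applies to both horizontal maps (and the bottom one is an equivalence since $\Fin$ is cocartesian), giving $0$-coconnectedness. For surjectivity in the quasi-unitalising case one then translates the hypothesis on $\icat{O}$ into the quasi-unit condition for algebras in $\Env(\icat{O})$ lying over $\{1\}$, using the explicit description of the homotopy category of $\Env(\icat{O})$. This envelope step is the idea your proposal lacks.
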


\begin{proof}
Using that $(-)^\rm{nu}$ is a colocalisation, it suffices to show the claim for the map \begin{equation}\label{equ:precomposition-map}j^*\colon \Map_{\Op}(\ed,\icat{O}) \lra \Map_{\Op}(E^\rm{nu}_d,\icat{O})
\end{equation} 
obtained by precomposition with the counit $j\colon \ed^\rm{nu}\rightarrow \ed$.

To start with, we consider the more restrictive case where $\icat{O}=\icat{C}$ is a symmetric monoidal $\infty$-category as opposed to a general $\infty$-operad. In this case the claim can be extracted from \cite{LurieHA}: it follows directly from the definition of the $\infty$-category $\Op$  \cite[2.1.4.1]{LurieHA} that the map in question agrees with the map
\begin{equation}\label{equ:map-of-alg-cats}
	j^* \colon \Alg_{\ed}(\icat{C})^\simeq \lra \Alg_{\ed^\rm{nu}}(\icat{C})^{\simeq}
\end{equation}
obtained by applying cores to the functor of $\infty$-categories $\Alg_{\ed}(\icat{C})\rightarrow \smash{\Alg_{\ed^\rm{nu}}}(\icat{C})$ induced by precomposition with $j$; here $\Alg_{\icat{P}}(\icat{C})$ for an $\infty$-operad $\icat{P}$ denotes as in \cite[2.1.2.7]{LurieHA} the $\infty$-category of $\icat{P}$-algebras in $\icat{C}$. By an application of \cite[5.4.4.5]{LurieHA} to the cocartesian fibration $\icat{C}^{\otimes}\times_{\Finstar}\ed^{\otimes}\rightarrow \ed^{\otimes}$ (see also the top of page 946 loc.cit.), this functor is equivalent to the inclusion $\smash{\Alg_{\ed^\rm{nu}}^{\rm{qu}}(\icat{C})\hookrightarrow \Alg_{\ed^\rm{nu}}(\icat{C})}$ of the subcategory of \emph{quasi-unital} algebras: those $\smash{\ed^\rm{nu}}$-algebras $A$ in $\icat{C}$ whose underlying non-unital associative algebra admits a \emph{quasi-unit}, i.e.\,there is a map $u\colon 1_{\icat{C}}\rightarrow A$ from the monoidal unit such that the compositions
\begin{equation}\label{equ:unit-maps}
	A\simeq 1_{\icat{C}}\otimes A\xrightarrow{u\otimes \id_A}A\otimes A\xrightarrow{\mu} A\quad\text{and}\quad A\simeq 1_{\icat{C}}\otimes A\xrightarrow{\id_A\otimes u}A\otimes A\xrightarrow{\mu} A, 
\end{equation} 
involving the multiplication $\mu$ of $A$, are homotopic to the identity. Morphisms in $\smash{\Alg_{\ed^\rm{nu}}^{\rm{qu}}(\icat{C})}$ are those morphism $f\colon A\ra B$ of $\smash{\ed^\rm{nu}}$-algebras such that for a choice of quasi-unit $u$ of $A$ the composition $(f\circ u)$ is a quasi-unit for $B$. Note that the latter condition is always satisfied if $f$ is an equivalence, so the functor \eqref{equ:map-of-alg-cats} on cores is fully faithful. This implies the first part of the claim since a fully faithful functor between $\infty$-groupoids corresponds via the equivalence between $\infty$-groupoids and spaces to an ``inclusion of path-components'', i.e.\,a $0$-coconnected map. The second part of the claim is equivalent to showing that \eqref{equ:map-of-alg-cats} is essentially surjective if $\icat{O}$ is quasi-unitalising which, by the description of this functor in terms of quasi-unital algebras just explained, is equivalent to showing that any $\smash{\ed^\rm{nu}}$-algebras $A$ in $\icat{C}$ admits a quasi-unit. The first condition in being quasi-unitalising implies that there is \emph{some} map $u\colon1_{\icat{C}}\rightarrow A$, and any such map is a quasi-unit since the second condition implies that \emph{any} self-map of $A$ is homotopic to the identity, so in particular the two in \eqref{equ:unit-maps}.

To extend this argument to the case of a general $\infty$-operad $\icat{O}$, we use that the inclusion $\CAlg(\Cat)\hookrightarrow \Op$ of symmetric monoidal $\infty$-categories into $\infty$-operads has a left-adjoint, the \emph{monoidal envelope} $\Env(-)\colon\Op\rightarrow \CAlg(\Cat)$ from \cite[2.2.4]{LurieHA}. Since $\Finstar$ is the terminal $\infty$-operad, this functor lifts to a functor on overcategories
\begin{equation}\label{equ:better-envelope} \Op\simeq \Op_{/\Finstar}\longrightarrow \CAlg(\Cat)_{/\Env(\Finstar)}\simeq \CAlg(\Cat)_{/\Fin}
\end{equation}
which we denote by the same symbol. Here we used that the envelope $\Env(\Finstar)$ of the terminal operad is equivalent to the category $\Fin$ of finite sets with the cocartesian monoidal structure \cite[2.3.7]{HaugsengKock}. Moreover, by \cite[2.4.3]{HaugsengKock}, the lifted functor \eqref{equ:better-envelope} is fully faithful, so the map in the statement is equivalent to the map
\[\Env(j)^*\colon \Map_{\CAlg(\Cat)_{/\Fin}}(\Env(\ed),\Env(\icat{O})) \lra \Map_{\CAlg(\Cat)_{/\Fin}}(\Env(E^\rm{nu}_d),\Env(\icat{O}))\]
which is---by adjunction---in turn equivalent to the map
\[j^*\colon \Map_{\Op_{/\Fin}}(\ed,\Env(\icat{O})) \lra \Map_{\Op_{/\Fin}}(\ed^\rm{nu},\Env(\icat{O})).\]
Since mapping spaces in overcategories are computed as fibres of the corresponding mapping spaces in the non-overcategories, this map is the map on vertical fibres of the square
\begin{equation}\label{equ:square-overcategory}
	\begin{tikzcd}[row sep=0.4cm]
	\Map_{\Op}(\ed,\Env(\icat{O}))\rar{j^*}\dar{}&\Map_{\Op}(\ed^\rm{nu},\Env(\icat{O}))\dar\\
	\Map_{\Op}(\ed,\Fin)\rar{j^*}&\Map_{\Op}(\ed^\rm{nu},\Fin).
	\end{tikzcd}
\end{equation}
Here the vertical arrows are induced by postcomposition with the map obtained by applying $\Env(-)$ to the unique map of operads $\icat{O}\rightarrow \Fin_*$, and the vertical fibres are taken at the analogous maps for $\ed$ and $\ed^\rm{nu}$. Since $\Env(\icat{O})$ and $\Fin$ are symmetric monoidal, both horizontal maps are $0$-coconnected as an instance of \eqref{equ:map-of-alg-cats}, so the map on fibres is $0$-coconnected as well. This finishes the proof of the first part of the claim.

This leaves us with showing the second part of the claim in the general case. For this we note that since $\Fin$ is cocartesian and the underlying $\infty$-category of colours of $\ed$ is trivial since the space of $1$-ary operations is contractible, an application of \cite[2.4.3.9]{LurieHA} shows that $\Map_{\Op}(\ed,\Fin)$ is equivalent to the core $\Fin^{\simeq}$, and with respect to this equivalence the vertical fibres are taken at $\{1\}\in\Fin^\simeq$. Since the component of $\{1\}\in \Fin^\simeq$ is contractible and the bottom horizontal map in \eqref{equ:square-overcategory} is $0$-coconnected, it suffices to show that the upper horizontal arrow in \eqref{equ:square-overcategory} hits those components in $\Map_{\Op}(\ed^\rm{nu},\Env(\icat{O}))$ that map to the image of $\{1\}$ under the bottom horizontal arrow. By the description of the components hit by \eqref{equ:map-of-alg-cats} given earlier, this is equivalent to showing that any non-unital associative algebra $A$ in $\Env(\icat{O})$ that maps via the vertical map to the image of $\{1\}\in\Fin$ with respect to the bottom horizontal map admits a quasi-unit. In order to see this, let us recall the description of the homotopy category of $\Env(\icat{O})$ from \cite[2.2.4.3]{LurieHA}: objects are given by a pair $(S,(c_s)_{s\in S\backslash\{*\}})$ of a finite pointed set $S\in\Fin_*$ and a sequence $c_s$ of objects in the underlying category of colours of $\icat{O}$. Morphisms $(S,(c_s)_{s\in S\backslash\{*\}})\rightarrow (T,(d_{t})_{t\in T\backslash\{*\}})$ are given by an active map $f\colon S\rightarrow T$ (i.e.\,$f^{-1}(*)=*$) and multioperations $g_t\in \Mul_{\icat{O}}((c_s)_{s\in f^{-1}(t)},d_t)$ for $t\in T\backslash\{*\}$. The map to $\Fin$ sends $(S,(c_s)_{s\in S\backslash\{*\}})$ to $S\backslash\{*\}$. The composition and the monoidal structure are given in the evident way. Now if we are given a non-unital associative algebra in $\Env(\icat{O})$ that maps to $\{1\}\in\Fin$, then the underlying object has the form $(\{1,*\},c)$ and the multiplication is given by a multi-operation $\mu\in\Mul_{\icat{O}}((c,c),c)$. To provide a quasi-unit, it thus suffices to give an element $u\in\Mul_{\icat{O}}(\varnothing,c)$ such that the two maps analogous to \eqref{equ:unit-maps} (using operadic composition) are homotopic to the identity in $\Mul_{\icat{O}}(c,c)$. The operad $\icat{O}$ being quasi-unitalising means that $\Mul_{\icat{O}}(\varnothing,c)$ is non-empty and $\Mul_{\icat{O}}(c,c)$ is connected, so this is always possible and the claim follows.\end{proof}

\section{Further extensions of \cref{athm:main}}
This section serves to explain several extensions of \cref{thm:main-general-target}. Firstly, in \cref{sec:tangential-structures}, we extend the result to allow variants of $\ed$ that include tangential structures (including the framed $\ed$-operad).  Secondly, in \cref{sec:localisations}, we extend the result to allow localisations of $\ed$. Thirdly, in \cref{sec:one-coloured} we extend the result to mapping spaces of one-coloured operads.

\subsection{Versions of $\ed$ with tangential structures}\label{sec:tangential-structures}
Recall that the $\infty$-operad $\ed$ is obtained as the operadic nerve of the one-coloured simplicial operad whose space $\ed(k)$ of $k$-ary operations is the space of rectilinear embeddings $\bigsqcup_k (-1,1)^{d}\hookrightarrow (-1,1)$. Instead of rectilinear embeddings, one may use all topological embeddings to define a related $\infty$-operad $\smash{\ed^{\Top}}$, which is denoted $\BTop(d)^{\otimes}\ra \Finstar$ in \cite[5.4.2.1]{LurieHA} since its underlying $\infty$-category of colours is equivalent to the classifying space $\BTop(d)$ of the topological group of homeomorphisms of $\bfR^d$ as a result of the Kister--Mazur theorem \cite[5.4.2.6]{LurieHA}. To define yet another $\infty$-operad, one may use $k$-tuples of self-embeddings of $(-1,1)^{d}$ instead of topological embeddings $\bigsqcup_k(-1,1)^{d}\hookrightarrow (-1,1)^{d}$. The resulting $\infty$-operad is equivalent to the cocartesian $\infty$-operad $\BTop(d)^{\sqcup}$ associated to $\BTop(d)$ \cite[2.4.3]{LurieHA}. An embedding $\bigsqcup_k (-1,1)^{d}\hookrightarrow (-1,1)^d$ is in particular a $k$-tuple of self-embeddings $(-1,1)^{d}$, so there is a map of $\infty$-operads $\smash{\ed^{\Top}}\ra \BTop(d)^{\sqcup}$. A map $\theta\colon B\ra \BTop(d)$ of spaces induces a map $B^{\sqcup}\ra \BTop(d)^{\sqcup}$ of cocartesian $\infty$-operads, so we may take the pullback 
\[\ed^{\theta}\coloneq \ed^{\Top}\times_{\BTop(d)^{\sqcup}}B^{\sqcup}\]
in $\infty$-operads. We call this $\infty$-operad the \emph{$\theta$-framed $\ed$-operad}. If $B$ is connected, it can equivalently be constructed as the operadic nerve of a one-coloured simplicial operad involving $\theta$-framed topological embeddings $\bigsqcup_k (-1,1)^{d}\hookrightarrow (-1,1)$. For $\theta=(\ast\ra \BTop(d))$, this recovers $\ed$ and for $\theta=(\rm{BSO}(d)\ra \BTop(d))$ this operad is unfortunately known as the \emph{framed little $d$-discs operad}. In this subsection, we generalise \cref{thm:main-general-target} to the $\theta$-framed case for any $\theta$:

\begin{theorem}\label{thm:framed-version}For $d\ge1$, a map $\theta\colon B \to B\Top(d)$ of spaces, and an $\infty$-operad $\icat{O}$, the map 
\[\Map_{\Op}(\ed^\theta,\icat{O}) \lra \Map_{\Op}(\ed^{\theta,\nonuni},\icat{O}^\nonuni)\]
is $0$-coconnected. If $\icat{O}$ is quasi-unitalising then this map is an equivalence.
\end{theorem}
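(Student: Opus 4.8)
The plan is to run the proof of \cref{thm:main-general-target} essentially line by line, the only structural difference being that the colour $\infty$-category of $\ed^\theta$ is now the space $B$ rather than a point. As there, the first step is to use that $(-)^{\nonuni}$ is a colocalisation in order to reduce to showing that precomposition with the counit $j\colon\ed^{\theta,\nonuni}\to\ed^\theta$ defines a map
\[
j^{\ast}\colon \Map_{\Op}(\ed^\theta,\icat{O})\lra\Map_{\Op}(\ed^{\theta,\nonuni},\icat{O})
\]
that is $0$-coconnected, and an equivalence when $\icat{O}$ is quasi-unitalising. For a symmetric monoidal $\infty$-category $\icat{O}=\icat{C}$ this is the map on cores of the precomposition functor $\Alg_{\ed^\theta}(\icat{C})\to\Alg_{\ed^{\theta,\nonuni}}(\icat{C})$, and since the space of $0$-ary operations of $\ed^\theta$ is $\ast\times_\ast\ast\simeq\ast$ the operad $\ed^\theta$ is unital; I would therefore apply \cite[5.4.4.5]{LurieHA}, which compares unital and non-unital algebras, to the cocartesian fibration $\icat{C}^{\otimes}\times_{\Finstar}(\ed^\theta)^{\otimes}\to(\ed^\theta)^{\otimes}$ to see that this functor is fully faithful, with essential image those $\ed^{\theta,\nonuni}$-algebras admitting a quasi-unit.

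To pass from this to a general $\infty$-operad $\icat{O}$, I would again use the fully faithful lift $\Op\simeq\Op_{/\Finstar}\to\CAlg(\Cat)_{/\Fin}$ of the monoidal envelope exactly as in the proof of \cref{thm:main-general-target}, which identifies $j^{\ast}$ with the map on vertical fibres of the analogue of the square \eqref{equ:square-overcategory} with $\ed^\theta$ and $\ed^{\theta,\nonuni}$ in place of $\ed$ and $\ed^{\nonuni}$. The one step that changes is the identification of the bottom map: from the pullback presentation $\ed^\theta=\ed^{\Top}\times_{\BTop(d)^{\sqcup}}B^{\sqcup}$ one reads off that the colour $\infty$-category of $\ed^\theta$ — and hence of $\ed^{\theta,\nonuni}$, which has the same $k$-ary operations for $k\ge1$ — is the space $B$, the $1$-ary operation space at a colour $c$ being $\Mul_{\ed^{\Top}}(\theta(c),\theta(c))\times_{\Mul_{\BTop(d)^{\sqcup}}(\theta(c),\theta(c))}\Mul_{B^{\sqcup}}(c,c)\simeq\Map_B(c,c)$. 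Hence, by \cite[2.4.3.9]{LurieHA}, the bottom map is the identity of $\Fun(B,\Fin^{\simeq})$ rather than of $\Fin^{\simeq}$, and the vertical fibres are taken at the constant functor with value $\{1\}$. Since the top map is $0$-coconnected by the symmetric monoidal case applied to $\Env(\icat{O})$, so is $j^{\ast}$, which gives the first assertion.

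For the second assertion, the symmetric monoidal case reduces the problem to showing that every $\ed^{\theta,\nonuni}$-algebra $A$ in $\Env(\icat{O})$ whose underlying functor $B\to\Env(\icat{O})$ takes values in the full subcategory spanned by the objects over $\{1\}\in\Fin$ admits a quasi-unit. I would check this colour by colour over $B$, exactly as in the one-coloured case: for each colour $c$, writing $a_c$ for the colour of $\icat{O}$ underlying the value of $A$ at $c$, the first clause of quasi-unitalising supplies an element of $\Mul_{\icat{O}}(\varnothing,a_c)$, and the second clause — connectedness of $\Mul_{\icat{O}}(a_c,a_c)$ — makes the relevant unit axioms hold automatically and, moreover, trivialises the monodromy of the local system $c\mapsto\Mul_{\icat{O}}(\varnothing,a_c)$ over $B$, so that a compatible family of units over all of $B$ can be chosen. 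I expect this last point — the coherence of the units over the whole colour space $B$ — to be the only genuinely new ingredient beyond the bookkeeping of \cref{thm:main-general-target}, and the connectedness clause in the definition of quasi-unitalising is precisely what is needed to make it go through.
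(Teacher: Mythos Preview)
Your approach is different from the paper's. Instead of rerunning the argument of \cref{thm:main-general-target} with $\ed^\theta$ in place of $\ed$, the paper \emph{reduces} to \cref{thm:main-general-target}: it proves (\cref{prop:nu-colim}) that $\ed^\theta$ is the colimit of a functor $G_\theta\colon B\to\Op$ whose values are equivalent to $\ed$, and that $(-)^{\rm{nu}}$ commutes with this particular colimit, so that the counit $\ed^{\theta,\rm{nu}}\to\ed^\theta$ is a $B$-indexed colimit of copies of $\ed^{\rm{nu}}\to\ed$. Precomposition then exhibits $j^\ast$ as a $B$-indexed limit of instances of the map treated in \cref{thm:main-general-target}, and the conclusion follows.

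The point in your argument that needs more justification is the direct appeal to \cite[5.4.4.5]{LurieHA}. That theorem, and the surrounding framework of \cite[5.4.4]{LurieHA}, is set up for $E_k$ rather than for an arbitrary unital $\infty$-operad; the notion of quasi-unit there is defined via the underlying nonunital $E_1$-algebra (using the map $E_1\to E_k$), and the proof reduces to the associative case by Dunn additivity. Neither ingredient has an evident analogue for $\ed^\theta$, so you cannot simply cite the result to conclude that $\Alg_{\ed^\theta}(\icat{C})\to\Alg_{\ed^{\theta,\rm{nu}}}(\icat{C})$ is fully faithful with the expected essential image. One clean way to fill this gap is precisely the paper's colimit decomposition, which identifies this functor with $\lim_B\big(\Alg_{\ed}(\icat{C})\to\Alg_{\ed^{\rm{nu}}}(\icat{C})\big)$. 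Once that is in place, your closing concern about coherence of units over $B$ also dissolves: being quasi-unital in Lurie's sense is a property checked one colour at a time, with no compatibility across $B$ required, so no monodromy argument is needed.
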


We will deduce \cref{thm:framed-version} from \cref{thm:main-general-target} by means of the following proposition:

\begin{proposition}\label{prop:nu-colim}Let $d\ge1$ and $\theta\colon B \to B\Top(d)$ a map of spaces. 
\begin{enumerate}
	\item\label{enum:assembly-i} There is a functor $G_\theta\colon B\ra \Op$ whose values are equivalent to $\ed$ and which satisfies $\colim_{b\in B}G_\theta(b)\simeq \ed^\theta$.
	\item\label{enum:assembly-ii} The canonical map $\colim_{b\in B}(G_\theta(b)^{\rm{nu}})\ra (\colim_{b\in B}G_\theta(b))^{\rm{nu}}$ is an equivalence.
\end{enumerate}
\end{proposition}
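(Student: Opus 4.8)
The plan is to reduce everything to two facts about $\Op$. The first, (a), is that finite limits in $\Op$ are computed levelwise in a presheaf model $\mathcal{P}$ of $\Op$ (complete dendroidal Segal spaces, say), because $\Op$ is a reflective localisation of $\mathcal{P}$, limits in $\mathcal{P}$ are levelwise, and $\Op\to\Cat_{/\Finstar}$ creates limits \cite{AyalaFrancisTanaka}. The second, (b) — the only genuinely non-formal input, and the main obstacle — is that colimits in $\Op$ indexed by a \emph{space} $B$, in contrast to general colimits which require a Segal-ification, are \emph{also} computed levelwise in $\mathcal{P}$. I would prove (b) as follows: the Segal and univalence conditions carving $\Op$ out of $\mathcal{P}$ have the form ``$\alpha_Z\colon P(Z)\to Q(Z)$ is an equivalence'', with $P,Q$ built from evaluations and finite limits; for $G\colon B\to\Op$ and the levelwise colimit $X\coloneq\colim^{\mathcal{P}}_B G$ one has $\alpha_X\simeq\colim_{b\in B}\alpha_{G(b)}$, since $\colim_B$ commutes with the evaluations (being levelwise) and with the pullbacks and finite products occurring in $P$ and $Q$ (which may be checked on fibres over $B$, colimits in the $\infty$-topos $\mathcal{S}$ of spaces being universal); so $\alpha_X$ is a colimit of equivalences, hence an equivalence, and $X\in\Op$ computes $\colim_B G$.

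For part~(i), recall that a cocartesian $\infty$-operad on a space $Y$ has value $Y$ on every tree \cite{LurieHA}, so $B^\sqcup\simeq\colim_{b\in B}\Finstar$ in $\Op$. Define $G_\theta\colon B\to\Op$ by $G_\theta(b)\coloneq\ed^{\Top}\times_{\BTop(d)^\sqcup}\Finstar$, the pullback taken along the leg at $b$, i.e.\ along the composite $\Finstar\to B^\sqcup\xrightarrow{\theta^\sqcup}\BTop(d)^\sqcup$, which is the point $\theta(b)$ of $\BTop(d)$; this is functorial in $b$, and by construction each $G_\theta(b)$ is $\ed^{(\ast\to\BTop(d))}$, which the text before the statement identifies with $\ed$ — the same $\ed$ for every $b$, since $\BTop(d)$ is connected. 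It remains to check that base change along $B^\sqcup\to\BTop(d)^\sqcup$ commutes with the colimit $\colim_{b\in B}\Finstar$, so that $\colim_B G_\theta\simeq\ed^{\Top}\times_{\BTop(d)^\sqcup}B^\sqcup=\ed^\theta$. This is a levelwise check: by (a) and (b) one is asking whether $\ed^{\Top}(T)\times_{\BTop(d)^\sqcup(T)}(\colim_b\Finstar(T))\to\colim_b(\ed^{\Top}(T)\times_{\BTop(d)^\sqcup(T)}\Finstar(T))$ is an equivalence at each tree $T$, which it is because colimits in $\mathcal{S}$ are universal; tracking the cocone identifies the comparison map with this equivalence.

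For part~(ii), observe that by (a) the functor $(-)^{\rm{nu}}=(-)\times_{\Finstar}\Surj$ is computed levelwise, and since $\Finstar(T)=\ast$ while $\Surj(T)$ is $\ast$ or $\varnothing$ according to whether the tree $T$ has no nullary vertex or has one, $(-)^{\rm{nu}}$ is simply the operation that deletes the values on trees containing a nullary vertex. A levelwise operation trivially commutes with the levelwise colimit of part~(i): on a tree $T$ with no nullary vertex both $\colim_B(G_\theta(-)^{\rm{nu}})$ and $(\colim_B G_\theta)^{\rm{nu}}$ have value $\colim_{b\in B}G_\theta(b)(T)$, and on a tree with a nullary vertex both have value $\colim_{b\in B}\varnothing=\varnothing$; hence the canonical map between them is an equivalence, and both sides lie in $\nuOp$ since a levelwise colimit vanishing on the stump-trees still vanishes there. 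The step carrying all the weight is therefore the levelwise computation of space-indexed colimits in $\Op$ from the first paragraph; the care it demands is to phrase the defining conditions of $\Op\subseteq\mathcal{P}$ as equivalences between functors preserving the relevant finite limits, so that they can be verified fibrewise over $B$.
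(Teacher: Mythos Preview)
Your approach is correct and genuinely different from the paper's. You work in a dendroidal presheaf model and lean on the fact that $B$-indexed colimits in $\Op$ (for $B$ a space) can be computed levelwise---your claim (b)---which you justify by noting that the Segal and completeness conditions are preserved since $\colim_B\colon\Fun(B,\icat{S})\to\icat{S}$ preserves pullbacks (equivalently: the forgetful $\icat{S}_{/B}\to\icat{S}$ does). This lets you construct $G_\theta$ by pulling back $\ed^{\Top}\to\BTop(d)^\sqcup$ along the legs of the cocone $\colim_B\Finstar\simeq B^\sqcup$, and makes part~(ii) essentially trivial, since $(-)^{\rm{nu}}$ visibly acts levelwise by killing the values on trees with a nullary vertex. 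The paper instead stays entirely in Lurie's framework: it identifies $B$-indexed colimits with the assembly functor $\genOp\to\Op$ applied to $B$-families of operads, builds $G_\theta$ by straightening the family obtained from the approximation $\widetilde{E}_d^{\Top}\to E_d^{\Top}$ of \cite[2.3.4.4]{LurieHA}, and for (ii) invokes the stability of approximations under pullback \cite[2.3.3.9]{LurieHA}. Your route is shorter and more conceptual once (b) is in hand, and would apply verbatim to any operad presented as a pullback over a cocartesian one; the paper's route avoids changing models and relies only on results already documented in \cite{LurieHA}. One point to tighten in your sketch of (b): the completeness condition is not literally a pullback condition on arbitrary presheaves---the space of equivalences is a priori an \emph{image}---but it becomes one once the Segal condition is known (inverses being homotopy-unique), so you should check Segal first and then argue completeness for the resulting Segal object.
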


\begin{proof}[Proof of \cref{thm:framed-version} assuming \cref{prop:nu-colim}] Firstly, by the colocalisation property of $(-)^{\rm{nu}}$ it suffices to show the claim for the map $\Map_{\Op}(\ed^\theta,\icat{O}) \ra \Map_{\Op}(\ed^{\theta,\nonuni},\icat{O})$ induced by precomposition with the counit $\smash{\ed^{\theta,\nonuni}\ra \ed^\theta}$. By \cref{prop:nu-colim}, this counit is a colimit of maps that are equivalent to the counit $\ed^{\nonuni}\ra \ed$ for which we already known the claim by \cref{thm:main-general-target}, so \cref{thm:framed-version} follows from the universal property of the colimit.\end{proof}

\begin{proof}[Proof of \cref{prop:nu-colim}]
We begin by recalling the point of view on colimits of $\Op$-valued functors via families of operads. For simplicity (and because it is all we need) we restrict to the case of functors $G\colon X\ra \Op$ defined on an $\infty$-groupoid $X$ as opposed to a general $\infty$-category. Consider the following commutative diagram of $\infty$-categories
\begin{equation}\label{equ:colim-is-assem}
	\begin{tikzcd}[ar symbol/.style = {draw=none,"\textstyle#1" description,sloped},	subset/.style = {ar symbol={\subset}}, row sep=0.4cm]
	\Fun(X,\Cat_{/\Finstar})\rar{\simeq}[swap]{\rm{unstr}}&\Cat_{/X\times \Finstar}&[-1cm]\\
	\Fun(X,\Op)\arrow[u,hookrightarrow]\arrow[dr,"\text{colim}",swap, bend right=5]\rar{\simeq}&\Fam(X)\arrow[u,hookrightarrow]\arrow[r,subset]&[-1cm](\genOp)_{/X\times\Finstar}\rar{\text{forget}}&\genOp\arrow[dll,"\rm{assem}",bend left=5]\\[-0.2cm]
	&\Op&&
	\end{tikzcd}
\end{equation}
The upper row is given by the unstraightening equivalence, which restricts to an equivalence between the subcategory $\Fun(X,\Op)$ of $\Fun(X,\Cat_{/\Finstar})$ and the subcategory $\Fam(X)$ of $\Cat_{/X\times \Finstar}\simeq \Cocart(X)_{/{(X\times \Finstar\ra X)}}$ whose objects are those functors $\icat{C}\ra X\times \Finstar$ that are families of operads indexed by $X$ in the sense of \cite[2.3.2.10]{LurieHA} and whose morphisms are those maps over $X\times \Finstar$ that preserve cocartesian lifts of inert morphisms in $\Fin$ (see the discussion in \cite[Section 2.11]{HinichYoneda}; note that any family of operads indexed by $X$ is cocartesian in Hinich's sense since $X$ is an $\infty$-groupoid).  The $\infty$-category $\Fam(X)$ can be identified with a full subcategory of the overcategory $(\genOp)_{/X\times\Finstar}$ of the $\infty$-category $\genOp$ of generalised operads in the sense of \cite[2.3.2.1-2.3.2.2]{LurieHA}, over the projection $\rm{pr}\colon X\times\Finstar\ra \Finstar$ in $\genOp$ \cite[2.3.2.13]{LurieHA}. The functor labelled \text{assem} is Lurie's assembly construction which is the left-adjoint to the full subcategory inclusion $\Op\subset\genOp$ \cite[2.3.3.3]{LurieHA}. This explains the diagram, except for the commutative of the lower triangle which---by the universal property of the colimit---follows from the sequence of equivalences
\begin{align*} 
	\Map_{\Fun(X,\Op)}(G,\rm{const}_\icat{O}) &\simeq \Map_{\Fam(X)}(\rm{unstr}(G),X \times \icat{O}) && \\
	&\simeq \Map_{(\genOp)_{/X \times \Finstar}}(\rm{unstr}(G),X \times \icat{O}) && \text{\cite[2.3.2.13]{LurieHA}}\\
	&\simeq \Map_{\genOp}(\rm{unstr}(G),\icat{O}) && \\
	&\simeq \Map_\Op(\rm{assem}(\rm{unstr}(G)),\icat{O}) && \text{\cite[2.3.3.3]{LurieHA}.}
\end{align*}
which is natural in $G\in \Fun(X,\Op)$ and $\icat{O}\in\Op$. Note that by the naturality of unstraightening, the value of $G\colon X\ra \Op$ at $x\in X$ corresponds to the pullback of the corresponding family $\rm{unstr}(G)\ra X\times\Finstar$ along $\{x\}\times\Finstar\hookrightarrow  X\times\Finstar$.

Equipped with \eqref{equ:colim-is-assem} we now turn to the proof of the first part of the claim. Since the underlying $\infty$-category of colours of $\smash{\ed^{\Top}}$ is $\BTop(d)$, so an $\infty$-groupoid, the proof of \cite[2.3.4.4]{LurieHA} produces a map of generalised $\infty$-operads $\smash{\widetilde{E}_d^{\Top}\ra E_d^{\Top}}$ where $\smash{\widetilde{E}_d^{\Top}}$ is the total space of a family of $\infty$-operads indexed by $\BTop(d)$: \[\smash{\widetilde{E}_d^{\Top}}\lra\BTop(d)\times\Finstar.\] The cited proof also shows that this map of generalised $\infty$-operads is an approximation in the sense of \cite[2.3.3.6]{LurieHA}, and \cite[5.4.2.9]{LurieHA} shows that the fibres of the family $\smash{\widetilde{E}_d^{\Top}}$ indexed by $\BTop(d)$ are equivalent to $E_d$. By pulling back along $B\ra \BTop(d)$ and using that approximations are preserved by pullbacks \cite[2.3.3.9]{LurieHA}, we obtain an analogous approximation $\smash{\widetilde{E}_d^{\theta}\ra E_d^{\theta}}$ to $\smash{E_d^{\theta}}$ by a family of operads $\smash{\widetilde{E}_d^{\theta}}$ indexed by $B$ whose fibres are equivalent to $E_d$. Under the equivalence $\Fun(B,\Op)\simeq\Fam(B)$ from \eqref{equ:colim-is-assem}, the family $\smash{\widetilde{E}_d^{\theta}}$ corresponds to a functor $G_\theta\in \Fun(B,\Op)$ whose values are equivalent to $E_d$. Moreover, commutativity of \eqref{equ:colim-is-assem} implies the first equivalence in the sequence 
\begin{equation}\label{equ:colim-presentation}
	\colim G_\theta\simeq \rm{assem}(\widetilde{E}_d^{\theta})\simeq \smash{E_d^{\theta}};
\end{equation}
the second equivalence follows from \cite[2.3.4.5 (1), Proof of 2.3.4.4]{LurieHA}. This proves \ref{enum:assembly-i}.

To prove \ref{enum:assembly-ii}, we first note that there is a variant of the upper-left square in \eqref{equ:colim-is-assem} where one replaces the category $\Finstar$ by $\Surj$, the category $\Op$ by $\nuOp$ and $\Fam(X)$ by the $\infty$-category $\Fam_{\Surj}(X)$ of $\Surj$-families of operads indexed by $X$ in the sense of \cite[2.11]{HinichYoneda} if one makes $X\times \Surj$ into a decomposition category as in \cite[2.11.1]{HinichYoneda}. Now consider the commutative diagram of $\infty$-categories
\begin{equation}\label{equ:nu-on-families}
	\begin{tikzcd}[column sep=2cm,row sep=0.5cm]
	\Fun(B,\Op)\arrow[r,"{\iota^*}"]\dar{\simeq}\arrow[rr,"{(-)^{\rm{nu}}}", bend left=10]& \Fun(B,\nuOp)\arrow[r,"{\iota_*}"]\dar{\simeq}&\Fun(B,\Op)\dar{\simeq}\\
	\Fam(B)\rar{\iota^*}&\Fam_{\Surj}(B)\rar{\iota_*}&\Fam(B)
	\end{tikzcd}
\end{equation}
where the left horizontal arrows are induced by pullback along $\iota\colon \Surj\hookrightarrow\Fin$ and $\id_B\times\iota$ respectively, and the right horizontal arrows by postcomposition with $\iota$ and $\id_B\times\iota$ respectively. The right horizontal arrows are the respective left-adjoints to the left horizontal arrows (see \cite[2.6.6]{HinichYoneda} for the lower row). Now consider the pullback square
\begin{equation}\label{equ:pullback-approx}
	\begin{tikzcd}[column sep=0.5cm,row sep=0.5cm]
	j^* \widetilde{E}^{\theta}_d\rar\dar& \ed^{\theta,\rm{nu}}\dar{j}\\
	\widetilde{E}_d^{\theta}\rar&\ed^\theta
	\end{tikzcd}
\end{equation}
where $j$ is the counit of the $(\iota_\ast,\iota^\ast)$-adjunction of endofunctors on $\Op$. Since this counit is by construction the  pullback inclusion $\iota^*E_d^{\theta}\ra E_d^{\theta}$ viewed as a map of operads, the left vertical arrow is the analogous pullback inclusion $\iota^*\widetilde{E}_d^{\theta}\ra \widetilde{E}_d^{\theta}$ viewed as a map of families of operads indexed by $B$. The latter agrees with the counit of the $(\iota_\ast,\iota^\ast)$-adjunction of endofunctors on $\Fam(B)$, so it agrees in view of \eqref{equ:nu-on-families}, via the equivalence $\Fam(B)\simeq \Fun(B,\Op)$, with the counit inclusion $((-)^{\rm{nu}}\circ G_\theta)\ra G_\theta$ of the $(\iota_\ast,\iota^\ast)$-adjunction of endofunctors on $\Fun(B,\Op)$. Using commutativity of \eqref{equ:colim-is-assem}, taking adjoints in \eqref{equ:pullback-approx} thus induces a commutative diagram
\[
	\begin{tikzcd}[ar symbol/.style = {draw=none,"\textstyle#1" description,sloped},	equ/.style = {ar symbol={\simeq}},column sep=0.5cm,row sep=0.5cm]
	\rm{colim}((-)^{\rm{nu}}\circ G_\theta)\dar\arrow[r,equ]&[-0.3cm]\rm{assem}(j^* \widetilde{E}^{\theta}_d)\rar\dar& E_d^{\theta,\rm{nu}}\dar{j}\\
	\rm{colim}(G_\theta)\arrow[r,equ]&\rm{assem}(\widetilde{E}_d^{\theta})\rar{\simeq}&\ed^\theta
	\end{tikzcd}
\]
whose bottom right equivalence featured in \eqref{equ:colim-presentation}. To show the claim it thus suffices to show that the upper right arrow is an equivalence. This follows from \cite[2.3.4.5 (1), Proof of 2.3.4.4]{LurieHA}, since the top arrow in \eqref{equ:pullback-approx} is an approximation because the bottom arrow is an approximation by construction and approximations are pullback-stable \cite[2.3.3.9]{LurieHA}.
\end{proof}

\subsection{Localised versions of $\ed$}\label{sec:localisations} The following is a direct consequence of \cref{thm:framed-version} and the universal property of localisations.

\begin{theorem}\label{thm:local-version}For $d\ge1$, a map of spaces $\theta\colon B\ra \rm{BTop}(d)$, an $\infty$-operad $\icat{O}$, and a localisation $L_* \colon \Op \to \Op$ commuting with $(-)^{\rm{nu}}$, the map 
	\[(-)^{\rm{nu}}\colon \Map_{\Op}(L_*E_d^{\theta},L_*\icat{O}) \lra \Map_{\Op}(L_*E_d^{\theta,\rm{nu}},L_*\icat{O}^\nonuni)\]
is $0$-coconnected. If $L_*\icat{O}$ is quasi-unitalising, then it is an equivalence.
\end{theorem}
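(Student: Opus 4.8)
The plan is to use the universal property of the localisation $L_*$ to remove it from both sources, reducing the statement to \cref{thm:framed-version} applied to the $\infty$-operad $L_*\icat{O}$. Throughout, the identifications $L_*\ed^{\theta,\rm{nu}}\simeq (L_*\ed^\theta)^{\rm{nu}}$ and $L_*\icat{O}^{\rm{nu}}\simeq (L_*\icat{O})^{\rm{nu}}$ provided by the hypothesis that $L_*$ commutes with $(-)^{\rm{nu}}$ let us regard the map in the statement as the honest map $(-)^{\rm{nu}}\colon \Map_{\Op}(L_*\ed^\theta,L_*\icat{O})\to \Map_{\Op}((L_*\ed^\theta)^{\rm{nu}},(L_*\icat{O})^{\rm{nu}})$. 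As in the proof of \cref{thm:main-general-target}, since $(-)^{\rm{nu}}$ is a colocalisation this map is naturally equivalent to precomposition with the counit $j\colon (L_*\ed^\theta)^{\rm{nu}}\to L_*\ed^\theta$, namely to $j^*\colon \Map_{\Op}(L_*\ed^\theta,L_*\icat{O})\to \Map_{\Op}((L_*\ed^\theta)^{\rm{nu}},L_*\icat{O})$, via postcomposition with the counit of $L_*\icat{O}$.

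Now I would rewrite everything on the level of the un-localised operads. Under the equivalence $(L_*\ed^\theta)^{\rm{nu}}\simeq L_*(\ed^{\theta,\rm{nu}})$, the counit $j$ corresponds to $L_*(j_0)$, where $j_0\colon \ed^{\theta,\rm{nu}}\to \ed^\theta$ is the counit of $\ed^\theta$. Since $L_*\icat{O}$ lies in the essential image of $L_*$ it is $L_*$-local, so the universal property of $L_*$ yields equivalences $\Map_{\Op}(L_*\ed^\theta,L_*\icat{O})\simeq \Map_{\Op}(\ed^\theta,L_*\icat{O})$ and $\Map_{\Op}(L_*\ed^{\theta,\rm{nu}},L_*\icat{O})\simeq \Map_{\Op}(\ed^{\theta,\rm{nu}},L_*\icat{O})$ given by precomposition with the respective units; by naturality of the unit, precomposition with $L_*(j_0)$ matches precomposition with $j_0$ under these equivalences. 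Applying the colocalisation observation once more, now in the form $\Map_{\Op}(\ed^{\theta,\rm{nu}},L_*\icat{O})\simeq \Map_{\Op}(\ed^{\theta,\rm{nu}},(L_*\icat{O})^{\rm{nu}})$, identifies the map of the theorem with
\[(-)^{\rm{nu}}\colon \Map_{\Op}(\ed^\theta,L_*\icat{O})\lra \Map_{\Op}(\ed^{\theta,\rm{nu}},(L_*\icat{O})^{\rm{nu}}),\]
which is precisely the map of \cref{thm:framed-version} for the $\infty$-operad $L_*\icat{O}$. That result then gives that this map is $0$-coconnected, and an equivalence when $L_*\icat{O}$ is quasi-unitalising, hence the same holds for the original map.

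I expect the only genuine point to verify is the compatibility, used in the second paragraph, of the equivalence $L_*\circ(-)^{\rm{nu}}\simeq (-)^{\rm{nu}}\circ L_*$ with the counit of the colocalisation $(-)^{\rm{nu}}$ — concretely, that $L_*$ carries the counit $\ed^{\theta,\rm{nu}}\to \ed^\theta$ to the counit $(L_*\ed^\theta)^{\rm{nu}}\to L_*\ed^\theta$, and similarly for $\icat{O}$. This is best understood as part of what it means for $L_*$ to commute with $(-)^{\rm{nu}}$, and in any case follows from the idempotence of the two (co)localisations; the remaining manipulations are purely formal bookkeeping of mapping spaces.
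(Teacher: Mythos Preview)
Your proposal is correct and follows exactly the route the paper indicates: the paper simply states that the theorem is a direct consequence of \cref{thm:framed-version} and the universal property of localisations, and you have carefully unwound what that means. Your remark that the compatibility of counits follows from idempotence is the right justification for the one step that is not purely formal.
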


\begin{remark}A source of localisations as in \cref{thm:local-version} is the following. By definition, a (reflective) localisation $L \colon \icat{S} \to \icat{S}$ of the $\infty$-category $\icat{S}$ of spaces is given by precomposing a fully faithful right-adjoint $R_0 \colon \icat{S}_0 \to \icat{S}$ with left-adjoint $L_0 \colon \icat{S} \to \icat{S}_0$. If $L_0$ preserves finite products then so does $R_0$, and then both $L_0$ and $R_0$ are symmetric monoidal with respect to the cartesian monoidal structures. As a consequence of \cite[Proposition 3.5.10]{ChuHaugseng}, they then induce on categories of enriched $\infty$-operads a fully faithful right adjoint $(R_0)_* \colon \Op(\icat{S}_0) \to \Op(\icat{S}) = \Op$ with left adjoint $(L_0)_* \colon \Op = \Op(\icat{S}) \to \Op(\icat{S}_0)$. In particular, the composition $L_* = (R_0)_* \circ (L_0)_* \colon \Op \to \Op$ is a localisation. On spaces of multi-operations, this is given by applying $L$, so $L_*$ commutes with $(-)^\nonuni$ if $L$ preserves the empty set and $L_*$ preserves the property of being quasi-unitalising if furthermore $L$ preserves connected spaces.\end{remark}

\begin{remark}
	For rationalisation, this gives a conceptual reason for the observation of Fresse--Willwacher \cite[Section 7]{FresseWillwacher} that their models for the automorphism spaces of the unitary and non-unitary versions of the rationalised $E_d$-operad $(\ed)_{\mathbf{Q}}$ agree. 
\end{remark}

\subsection{The one-coloured version of $\ed$}\label{sec:one-coloured} So far we worked in the $\infty$-category of $\infty$-operads $\Op$ which is, as mentioned in the introduction, equivalent to the underlying $\infty$-category of the model category of coloured simplicial operads. However, for some applications, the $\infty$-category $\Opast$ underlying the model category of \emph{one-coloured} simplicial operads plays a role. There is an evident forgetful functor $\Opast\to \Op$ which is---analogous to the situation of comparing simplicial groups with simplicial groupoids---\emph{not} fully faithful: this functor factors through the slice category $(\Op)_{*/}$ over the one-coloured operad  $*$ with only the identity operation since $*$ is initial in $\Opast$, and it is the resulting functor $\Opast\to \Op_{*/}$ that is fully faithful instead:

\begin{lemma}\label{lem:forgot-ff} 
	The forgetful functor $\Opast\to \Op_{*/}$ is fully faithful.
\end{lemma}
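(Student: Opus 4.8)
The plan is to realise both $\Opast$ and $\Op_{*/}$ as full subcategories of a single $\infty$-category by means of monoidal envelopes, and then to conclude using the two-out-of-three property of fully faithful functors. For the target side I will use the envelope $\Env\colon \Op\simeq \Op_{/\Finstar}\to \CAlg(\Cat)_{/\Fin}$, which is fully faithful by \cite[2.4.3]{HaugsengKock} (as already invoked above). The operad $*$ is the trivial one-coloured $\infty$-operad, all of whose operations are identities, so the morphisms of its envelope $\Env(*)$ are exactly the bijections; hence $\Env(*)\simeq \Fin^{\simeq}$ is the free symmetric monoidal $\infty$-groupoid on a point, and the composite $\Fin^{\simeq}\simeq \Env(*)\to \Env(\Finstar)\simeq \Fin$ is the standard inclusion. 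Slicing the fully faithful functor $\Env$ under $\Env(*)$ therefore yields a fully faithful functor
\[\gamma\colon \Op_{*/}\lra \icat{D}\coloneqq\bigl(\CAlg(\Cat)_{/\Fin}\bigr)_{\Fin^{\simeq}/}.\]

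For the source side I will construct the analogous envelope for one-coloured simplicial operads: the left adjoint $\Env_1$ of the functor sending a symmetric monoidal simplicial category with a chosen object $(\icat{C},x)$ to the one-coloured simplicial operad $n\mapsto \icat{C}(x^{\otimes n},x)$. Explicitly $\Env_1(P)$ has object monoid $\mathbf{N}$ and hom-objects $\Env_1(P)(m,n)=\bigsqcup_{f\colon \ul{m}\to\ul{n}}\prod_{j\in\ul{n}}P(|f^{-1}(j)|)$ with $\otimes$ given by $+$ on objects, and it carries canonical symmetric monoidal functors $\Fin^{\simeq}\to \Env_1(P)\to \Fin$ making it an object of $\icat{D}$. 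Recording this datum makes $\Env_1$ \emph{strictly} fully faithful --- for a one-coloured operad, a symmetric monoidal functor preserving the chosen object is precisely an operad map --- and, being assembled from the generating (trivial) cofibrations of one-coloured simplicial operads, it is left Quillen. The one-coloured counterpart of \cite[2.4.3]{HaugsengKock}, proved by the same argument, then says that this full faithfulness survives inverting the weak equivalences, so that there is a fully faithful functor
\[\Env_1^{\infty}\colon \Opast\lra \icat{D}.\]

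Unwinding the two constructions produces a natural equivalence $\Env_1^{\infty}\simeq\gamma\circ u$, where $u\colon\Opast\to\Op_{*/}$ is the functor of the lemma and the matching of structure maps reflects that $\Env$ sends the canonical colour $*\to P$ to the canonical functor $\Fin^{\simeq}\to \Env_1(P)$. With this in hand the lemma follows formally: $\gamma$ and $\gamma\circ u$ are both fully faithful, hence so is $u$. The one non-formal ingredient --- and the step I expect to be the main obstacle --- is the full faithfulness of $\Env_1^{\infty}$, i.e.\ that the strict full faithfulness of the one-coloured envelope is unaffected by the passage to derived functors: cofibrantly resolving a one-coloured simplicial operad and then forming its envelope should still compute the monoidal envelope of the associated $\infty$-operad together with its distinguished colour. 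This runs exactly parallel to the full faithfulness of $\Env\colon\Op\hookrightarrow\CAlg(\Cat)_{/\Fin}$ used above and I would handle it in the same way, the essential points being that $\Env_1$ preserves cofibrations and that the relevant comparison maps between (homotopy colimits of) envelopes are weak equivalences.
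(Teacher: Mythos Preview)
Your route is quite different from the paper's and considerably more elaborate. The logic is sound up to the point you flag yourself: the full faithfulness of the one-coloured envelope $\Env_1^{\infty}$. But observe that, since $\gamma$ is already known to be fully faithful, full faithfulness of $\Env_1^{\infty}\simeq\gamma\circ u$ is \emph{equivalent} to full faithfulness of $u$ --- so your reduction has not moved the goalpost, only relabelled it. Your sketch for closing the gap (``handle it the same way as \cite[2.4.3]{HaugsengKock}'') is not convincing as stated: the Haugseng--Kock argument is an intrinsically $\infty$-categorical characterisation of the essential image of $\Env$ via equifibred symmetric monoidal functors, not a model-categorical comparison, and it is unclear how to run a one-coloured analogue without already understanding how $\Opast$ sits inside $\Op_{*/}$. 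Nor is ``strictly fully faithful and left Quillen'' enough in general to deduce derived full faithfulness --- one still needs the derived unit to be an equivalence, and you have not argued this.

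The paper's proof avoids all of this with a direct three-line model-categorical argument. The forgetful functor $\mathrm{Op}^{*}\to(\mathrm{Op})_{*/}$ on simplicial operads has a right adjoint, sending a coloured simplicial operad under $*$ to its full suboperad on the distinguished colour. Both adjoints visibly preserve weak equivalences, so the adjunction descends to underlying $\infty$-categories, and the unit is an isomorphism on the nose: forgetting a one-coloured operad to a coloured one under $*$ and then restricting back to the unique colour recovers the original operad. This immediately gives that the induced left adjoint $\Opast\to\Op_{*/}$ is fully faithful. I would recommend this direct argument over the envelope detour.
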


\begin{proof}
	Denoting by $\mathrm{Op}$ and $\mathrm{Op}^*$ the model categories of simplicial coloured operads and simplicial one-coloured operads respectively, the forgetful functor $\mathrm{Op}^* \ra (\mathrm{Op})_{*/}$ has a right adjoint which sends a simplicial coloured operad under $*$ to the full suboperad whose only colour is the one in the image of $*$. Clearly, both adjoints preserve weak equivalences, so it follows that the left adjoint induces a functor $\Opast \to (\Op)_{*/}$, which can be identified with the functor in the statement. The claim now follows from the fact that the counit of the adjunction is an isomorphism, so in particular a weak equivalence.
\end{proof}

Being the operadic nerve of a one-coloured simplicial operad when $B$ is connected, $\ed^\theta$ may be considered as an object in $\Opast$. The analogue of \cref{thm:framed-version} in this setting reads as follows:
\begin{theorem}\label{thm:one-colour}
For $d \geq 1$, a map of connected spaces $\theta\colon B\ra \rm{BTop}(d)$, and a one-coloured simplicial operad $\icat{O}$, the map
\[(-)^{\rm{nu}}\colon \Map_{\Opast}(\ed^\theta,\icat{O})\to \Map_{\Opast}(\ed^{\theta,\nonuni},\icat{O}^{\nonuni})\]
is $0$-coconnected. If $\icat{O}(0)$ is nonempty and $\icat{O}(1)$ is connected, then this map is an equivalence.
\end{theorem}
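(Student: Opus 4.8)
The plan is to deduce this from \cref{thm:framed-version} by using \cref{lem:forgot-ff} to pass from $\Opast$ to the slice $\Op_{*/}$, in which mapping spaces are homotopy fibres of mapping spaces in $\Op$. First I would record two translations. (1) The hypotheses that $\icat{O}(0)$ be non-empty and $\icat{O}(1)$ be connected say precisely that the $\infty$-operad underlying $\icat{O}$ is quasi-unitalising in the sense of \cref{dfn:quasi-unitalising}, since for a one-coloured operad the spaces $\Mul_{\icat{O}}(\varnothing,c)$ and $\Mul_{\icat{O}}(c,c)$ are computed, after fibrant replacement, by $\icat{O}(0)$ and $\icat{O}(1)$. (2) The operad $*$ is itself non-unitary, so $(-)^{\rm{nu}}$ preserves the slice over $*$ and restricts to a colocalisation on $\Op_{*/}$; moreover, as non-unitarising a one-coloured simplicial operad amounts to setting its space of $0$-ary operations to $\varnothing$, the fully faithful functor of \cref{lem:forgot-ff} intertwines the two incarnations of $(-)^{\rm{nu}}$ and sends the counit $\ed^{\theta,\rm{nu}}\to\ed^\theta$ to the counit $j$ of the colocalisation on $\Op_{*/}$.

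Granting this, \cref{lem:forgot-ff} identifies the map in the statement with the map $(-)^{\rm{nu}}\colon \Map_{\Op_{*/}}(\ed^\theta,\icat{O})\to\Map_{\Op_{*/}}(\ed^{\theta,\rm{nu}},\icat{O}^{\rm{nu}})$, which by the colocalisation property factors as precomposition $j^*$ with $j$ followed by the equivalence $\Map_{\Op_{*/}}(\ed^{\theta,\rm{nu}},\icat{O}^{\rm{nu}})\xrightarrow{\ \sim\ }\Map_{\Op_{*/}}(\ed^{\theta,\rm{nu}},\icat{O})$, the latter being an equivalence because $\ed^{\theta,\rm{nu}}$ is non-unitary. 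So it suffices to show that $j^*\colon \Map_{\Op_{*/}}(\ed^\theta,\icat{O})\to\Map_{\Op_{*/}}(\ed^{\theta,\rm{nu}},\icat{O})$ is $0$-coconnected, and an equivalence when $\icat{O}$ is quasi-unitalising.

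Here I would use that mapping spaces in $\Op_{*/}$ are the homotopy fibres of the corresponding mapping spaces in $\Op$ over the point given by the unit $u\colon *\to\icat{O}$. Since $j$ is compatible with the units of $\ed^\theta$ and $\ed^{\theta,\rm{nu}}$, precomposition with $j$ is the map on vertical homotopy fibres (taken over $u$) in the commuting square
\[
\begin{tikzcd}[row sep=0.4cm, column sep=0.9cm]
\Map_{\Op}(\ed^\theta,\icat{O})\arrow[r,"j^*"]\arrow[d]&\Map_{\Op}(\ed^{\theta,\rm{nu}},\icat{O})\arrow[d]\\
\Map_{\Op}(*,\icat{O})\arrow[r,"\id"]&\Map_{\Op}(*,\icat{O}),
\end{tikzcd}
\]
whose bottom arrow is the identity. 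For such a square, an iterated-pullback computation shows that the homotopy fibres of the induced map on vertical fibres coincide with those of the top map $j^*\colon \Map_{\Op}(\ed^\theta,\icat{O})\to\Map_{\Op}(\ed^{\theta,\rm{nu}},\icat{O})$; hence the map on fibres is $0$-coconnected, respectively an equivalence, as soon as this top map is. Finally I would invoke \cref{thm:framed-version}, composed with the equivalence $\Map_{\Op}(\ed^{\theta,\rm{nu}},\icat{O}^{\rm{nu}})\simeq\Map_{\Op}(\ed^{\theta,\rm{nu}},\icat{O})$ (once more because the source is non-unitary): it says exactly that this top map is $0$-coconnected, and an equivalence when the $\infty$-operad underlying $\icat{O}$ is quasi-unitalising, which by translation (1) is the stated hypothesis. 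This yields both halves of the claim.

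The homotopy-theoretic content is thus entirely carried by \cref{thm:framed-version}; the work specific to this statement is the bookkeeping of translation (2) — chiefly checking that \cref{lem:forgot-ff} intertwines the non-unitarisation functors and that $\ed^{\theta,\rm{nu}}$ is the operadic nerve of the expected one-coloured simplicial operad — which I expect to be routine but the point that most needs to be spelled out carefully.
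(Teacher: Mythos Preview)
Your proof is correct and follows essentially the same route as the paper: use \cref{lem:forgot-ff} to identify $\Opast$-mapping spaces with fibres of $\Op$-mapping spaces over $\Map_{\Op}(*,\icat{O})$, and then invoke \cref{thm:framed-version} for the top row. The only cosmetic difference is that the paper keeps $\icat{O}^{\rm{nu}}$ in the target throughout and observes that the induced map $\Map_{\Op}(*,\icat{O})\to\Map_{\Op}(*,\icat{O}^{\rm{nu}})$ on the base is an equivalence (both sides being $\icat{O}(1)^{\simeq}$), whereas you first use the colocalisation to replace $\icat{O}^{\rm{nu}}$ by $\icat{O}$ so that the bottom arrow becomes literally the identity; these are two ways of saying the same thing, and your iterated-pullback remark that $0$-coconnectedness passes to fibres (since it is characterised by $(-1)$-truncated homotopy fibres) makes the first half of the claim a bit more explicit than the paper does.
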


\begin{proof}	
Both rows in the commutative diagram
\[\begin{tikzcd}[column sep=0.5cm,row sep=0.5cm] \Map_{\Opast}(\ed^\theta,\icat{O}) \dar \rar & \Map_\Op(\ed^\theta,\icat{O}) \dar\rar &\Map_{\Op}(*,\icat{O})\dar\\
\Map_{\Opast}(\ed^{\theta,\nonuni},\icat{O}^\nonuni)  \rar & \Map(\ed^{\theta,\nonuni},\icat{O}^\nonuni)\rar&\Map_{\Op}(*,\icat{O}^\nonuni), \end{tikzcd}\]
are fibre sequences as a result of \cref{lem:forgot-ff} and the fact that mapping spaces in an under-$\infty$-category are the fibres of the respective mapping spaces in the non-under-$\infty$-categories. In view of this, the claim follows from the fact that the middle and right vertical maps are equivalences: the former by \cref{thm:framed-version}, and the latter since $\Map_{\Op}(*,\icat{O})$ and $\Map_{\Op}(*,\icat{O}^\nonuni)$ are both equivalent to the components $\icat{O}(1)^{\simeq}\subseteq \icat{O}(1)$ that are invertible under composition.
\end{proof}

\begin{remark}
The case $d=1$ of \cref{thm:one-colour} was proved by Muro \cite[p.\,2146]{murohomotopy}.
\end{remark}

\section{\cref{athm:end-is-aut}}
We conclude by proving \cref{athm:end-is-aut}: any endomorphism of $\ed$ is an equivalence.
\begin{proof}[Proof of \cref{athm:end-is-aut}]
It suffices to show that any self-map $\varphi\colon \ed\rightarrow\ed$ induces an equivalence on the space $\ed(k)$ of $k$-ary operations for all $k\ge0$. Recall that $\ed(k)$ is equivalent to the space of $k$ ordered configurations in $\bfR^d$. The claim for $d=1$ follows from the fact that $\Sigma_k$-equivariant self-maps of $E_1(k)\simeq\Sigma_k$ are equivalences. For $d=2$, the claim follows from  \cite[Thm 8.5]{Horel}. 

In the remaining cases $d\ge3$, we use that $E_d(k)$ is simply connected for all $k$, so by Hurewicz's theorem it suffices to show that $\varphi$ induces an isomorphism on the operad $H_*(\ed)$ in graded abelian groups obtained by taking arity-wise integral homology. We will use two facts about the operad $H_*(\ed)$: firstly, it is degreewise a free abelian group of finite rank (this follows from \cite[III. Lemma 6.2]{Cohen}), so it suffices to show that $\varphi$ induces a surjection in homology. Secondly,  $H_*(\ed)$ is generated under operad compositions in arity $2$ (this follows from the fact that $H_*(\ed)$ is the $d$-Poisson operad, see e.g.\,\cite[Theorem 6.3]{Sinha}). Hence, since $\ed(2)\simeq S^{d-1}$, the operad $H_*(\ed)$ is supported in degrees $H_{t(d-1)}(\ed)$ for $t\ge0$ and $\varphi$ acts in this degree by multiplication with $D^t$ where $D$ is the degree of the induced self-map of $\ed(2)$. The task thus becomes to show $D=\pm1$ which we do by proving that $D$ is not divisible by any prime $p$. If $D$ were divisible by $p$, then by the above discussion $\varphi$ would act by multiplication with $0$ on the reduced $\bfF_p$-homology of $\ed(p)$. In the homological $\bfF_p$-Serre spectral sequence of the fibre sequence $\ed(p)\rightarrow \ed(p)/\Sigma_p\rightarrow B\Sigma_p$, this means that $\varphi$ acts by $0$ on all rows except the bottom one, on which it acts as the identity. This implies that there are no nontrivial differentials out of the bottom row, so the map $\ed(p)/\Sigma_p\rightarrow B\Sigma_p$ is surjective on $\bfF_p$-homology. But this cannot happen since $B\Sigma_p$ has nontrivial $\bfF_p$-homology in arbitrarily high degree and $\ed(p)/\Sigma_p$ is equivalent to a finite-dimensional manifold, namely the configuration space of $p$ \emph{unordered} points in $\bfR^d$. (It may be worth observing that this proof goes through with a cyclic subgroup $C_p \subset \Sigma_p$ in place of $\Sigma_p$, so it does not require the full $\Sigma_p$-equivariance of $\varphi$.)
\end{proof}

\begin{remark}\cref{athm:end-is-aut} fails for several variants of the $\ed$-operad:
\begin{enumerate}
\item It fails in general for the version $E_d^{\theta}$ with  tangential structures: take $\theta$ to be the map $X \to \ast \to \BTop(d)$ and use that any self-map $\psi\colon X\ra X$ induces a self-map of $\ed^\theta$. This is an equivalence if and only if $\psi$ is an equivalence.
\item It fails in general for the localised versions of $E_d$:  there is an endomorphism of the cooperad $H^*(E_d;\bfQ)$ in commutative graded algebras that sends the generator of $H^{d-1}(E_d(2);\bfQ)\cong\bfQ$ to zero. By a version of formality of the rationalised $\ed$-operad $(\ed)_{\bfQ}$ (see \cite[Theorem A, B]{FresseWillwacherFormality} or \cite[Section 12]{BoavidaHorel}), this endomorphism lifts to an endomorphism of $(\ed)_{\mathbf{Q}}$ which is not an equivalence.
\end{enumerate}
\end{remark}

\bibliographystyle{amsalpha}
\bibliography{./refs}

\end{document}